\documentclass[a4paper,12pt]{amsart}
\usepackage[utf8x]{inputenc}
\usepackage{amsmath,amssymb,amsthm}
\usepackage[all]{xy}
\usepackage{color}
\usepackage{hyperref}

\newtheorem{theorem}{Theorem}[section]

\newtheorem{lemma}[theorem]{Lemma}
\newtheorem{proposition}[theorem]{Proposition}

\theoremstyle{definition}

\theoremstyle{remark}
\newtheorem*{remark}{Remark}
\newcommand{\disc}{\mathop{\rm disc}}

\newcommand{\Gal}{\mathop{\textnormal{Gal}}}
\renewcommand{\phi}{\varphi}

\begin{document}

\title[Hardy-Littlewood conjecture over large finite fields]{Hardy-Littlewood tuple conjecture over large finite fields}

\author{Lior Bary-Soroker}

\begin{abstract}
We prove the following function field analog of the Hardy-Littlewood conjecture (which generalizes the twin prime conjecture) over large finite fields. 
Let $n,r$ be positive integers and $q$ an odd prime power. For a tuple of  distinct polynomials $\mathbf{a}=(a_1, \ldots, a_r)\in \mathbb{F}_q[t]^r$ of degree $<n$ let $\pi(q,n;\mathbf{a})$ be the number of monic polynomials $f\in \mathbb{F}_q[t]$ of degree $n$ such that $f+a_1, \ldots, f+a_r$ are simultaneously irreducible. We prove that $\pi(q,n;\mathbf{a})\sim\frac{q^n}{n^r}$ as $q\to \infty$, $q$ odd, and $n,r$ fixed.
\end{abstract}

\maketitle

\section{Introduction}
The twin prime conjecture predicts that there are infinitely many positive integers $n$ such that $n$ and $n+2$ are primes. In other words, if 
\[
\pi_2(x) = \#\{1\leq  n\leq x \mid n \mbox{ and } n+2 \mbox{ are primes}\}
\]
be the corresponding counting function, then the conjecture says that $\pi_2(x) \to \infty$ as $x\to \infty$. A more precise conjecture, the Hardy-Littlewood conjecture, predicts that
\[
\pi_2(x)\sim2\prod_{p>2}\frac{p(p-2)}{(p-1)^2} \frac{x}{\log^2 x}\approx 1.32 \frac{x}{\log^2 x}, \quad x\to \infty.
\]
More generally, let $\mathbf{a} = (a_1, \ldots, a_r)\in \mathbb{Z}^r$ be an $r$-tuple of distinct integers and let 
\[
\pi(x;\mathbf{a})= \#\{ 1\leq n\leq x \mid n+a_1, \ldots, n+a_r \mbox{ are all prime}\}
\]
is the corresponding prime counting function. 
Then $\pi(x) = \pi(x;0)$ and $\pi_2(x) = \pi(x;0,2)$. If $a_1, \ldots, a_r$ cover all residues modulo some prime $p$, e.g.\ $(a_1,a_2,a_3)=(0,2,4)$ and $p=3$, then for any $n$ there is $i$ such that $p\mid n+a_i$. In particular, $\pi(x;\mathbf{a})$ is bounded as $x\to \infty$. Otherwise the Hardy-Littlewood conjecture \cite{HardyLittlewood} says that  $\pi(x;\mathbf{a}) \to \infty$ as $x\to \infty$.

As in the twin prime conjecture, the Hardy-Littlewood conjecture gives the rate in which $\pi(x;\mathbf{a})$ tends to infinity: Let
\[
\nu(p; \mathbf{a}) = \#\{ a_1\mod p, \ldots, a_r \mod p\}
\]
and 
\[
C(\mathbf{a})=\prod_{p} \frac{1-\nu(p;\mathbf{a})/p}{(1-1/p)^r}.
\] 
It is an exercise in analytic number theory that, unless $a_1, \ldots, a_r$ cover all residues modulo some prime $p$,  the product converges, i.e.\ $C(\mathbf{a})>0$. Then the Hardy-Littlewood conjecture predicts that
\[
\pi(x;\mathbf{a}) \sim C(\mathbf{a})\frac{x}{\log^r x}, \quad x\to \infty.
\]

The objective of this paper is to prove an analog of the Hardy-Littlewood conjecture over \emph{large} finite fields of odd cardinality. Let $q$ be a prime power, let $\mathbb{F}_q$ be the finite field of $q$ elements, and let $n$ and $r$ be integers. For an $r$-tuple of distinct polynomials $\mathbf{a} = (a_1, \ldots, a_r)\in \mathbb{F}_q[t]^r$, each of degree  $<n$, we let 
\[
\begin{array}{ll}
\pi(q,n;\mathbf{a}) =\#\{ f\in \mathbb{F}_q[t] \mid &\mbox{$f$ is monic and of degree $n$ and}\\
&\mbox{$f+a_1, \ldots, f+a_r$ are all irreducible}\}.
\end{array}
\]
Since $q^n$ is the number of degree $n$ monic polynomials, it  plays the role of $x$ in this setting. Therefore it is desirable to estimate $\pi(q,n;\mathbf{a})$ as $q^n\to \infty$. The straightforward analog of the classical setting, and the more difficult case, is when $q$ is fixed and $n\to \infty$. In this paper we treat the asymptotic when $n$ is fixed and  $q\to \infty$:

\begin{theorem}\label{thm:main}
Let $n$ and $r$ be positive integers and $q$ an odd prime power. Then for every distinct $a_1, \ldots, a_r\in \mathbb{F}_q[t]$, each of degree $<n$, we have 
\[
\pi(q,n;\mathbf{a}) = \frac{q^n}{n^r} + O_{n,r}(q^{n-\frac12}).
\]
\end{theorem}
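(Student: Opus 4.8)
The plan is to translate the counting problem into a statement about the factorization type (equivalently, the Galois/monodromy group of the splitting field) of a generic polynomial, and then to invoke an explicit Chebotarev-type equidistribution theorem over finite fields. Concretely, I would write a monic degree-$n$ polynomial as $f(t) = t^n + c_{n-1}t^{n-1} + \cdots + c_0$, so that the $q^n$ monic polynomials are parametrized by the coefficient vector $\mathbf{c} = (c_0, \ldots, c_{n-1}) \in \mathbb{F}_q^n$. For a fixed shift $a_i$, the condition that $f + a_i$ is irreducible of degree $n$ is a condition on $\mathbf{c}$ that cuts out, up to the usual correspondence between irreducible polynomials and closed points / Frobenius conjugacy classes, the locus where the Frobenius acting on the roots of the universal polynomial is an $n$-cycle. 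The fraction of monic polynomials of degree $n$ that are irreducible is asymptotically $1/n$ (the proportion of $n$-cycles in $S_n$), which already gives the $r=1$ case and explains the main term $1/n^r$: the $r$ simultaneous irreducibility conditions should behave independently in the large-$q$ limit, contributing a factor $1/n$ each.

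The key step is to set this up as one simultaneous Galois-covering problem. I would introduce the family of $r$-tuples $(f+a_1, \ldots, f+a_r)$ as $f$ ranges over the parameter space $\mathbb{A}^n$ (the coefficients $\mathbf{c}$), and consider the splitting field of the product $\prod_{i=1}^r (f + a_i)$ over the function field $\mathbb{F}_q(\mathbf{c})$. The central claim one must establish is that the arithmetic and geometric monodromy groups of this family coincide and equal the full product $S_n^r$ (the $r$-fold product of symmetric groups), acting on the $r$ separate sets of $n$ roots. Granting this, the Lang–Weil / explicit Chebotarev estimate for function fields (in the form giving a power-saving error $O(q^{n-1/2})$ with constants depending only on the degree and number of variables of the family, i.e.\ on $n$ and $r$) says that the number of $\mathbf{c} \in \mathbb{F}_q^n$ for which the Frobenius lands in a prescribed conjugacy-invariant subset $C \subseteq S_n^r$ is $\frac{|C|}{|S_n^r|} q^n + O_{n,r}(q^{n-1/2})$. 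Taking $C$ to be the set of $r$-tuples each of which is an $n$-cycle, whose density is $(1/n)^r$, yields exactly $\pi(q,n;\mathbf{a}) = q^n/n^r + O_{n,r}(q^{n-1/2})$.

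The hard part will be proving the monodromy computation: that the geometric monodromy of the combined splitting field is the \emph{full} product $S_n^r$, rather than some proper subgroup arising from correlations between the $r$ shifted polynomials. Each individual factor $f + a_i$ is, up to the affine change of variables $f \mapsto f + a_i$, a copy of the universal monic polynomial, whose monodromy is the full $S_n$ by a classical argument (for instance via the irreducibility of the generic polynomial and a transitivity-plus-generation-by-transpositions argument using the discriminant). The genuine difficulty is the \emph{independence}: one must rule out any nontrivial algebraic relation tying the splitting field of $f+a_i$ to that of $f+a_j$ for $i \neq j$. I expect this is where the hypothesis that $q$ is odd enters, presumably through a discriminant or resultant computation showing that the discriminants $\disc(f+a_i)$ are independent (their product spans enough of the relevant cohomology, or generates $(\mathbb{F}_q^\times/(\mathbb{F}_q^\times)^2)^r$ in the appropriate sense), which forces the product structure on the monodromy via Goursat's lemma: any subgroup of $S_n^r$ surjecting onto each factor and onto the sign characters independently must be everything. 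Establishing this independence of discriminants, and checking that the error term from the explicit Chebotarev theorem is genuinely uniform with constants depending only on $n$ and $r$ (and not on $q$ or on the particular $a_i$, as long as they are distinct of degree $<n$), are the two technical points that the proof will have to pin down carefully.
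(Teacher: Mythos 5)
Your proposal follows essentially the same route as the paper: parametrize monic polynomials by their coefficients, prove the geometric Galois group of the splitting field of $\prod_i(\mathcal{F}+a_i)$ over $\tilde{\mathbb{F}}_q(\mathbf{U})$ is the full $S_n^r$ via linear independence of the discriminant square classes plus a Goursat-type group lemma (this is exactly where the paper, too, uses oddness of $q$), and then apply a Lang--Weil/explicit Chebotarev count to the class of $r$-tuples of $n$-cycles. The one step you leave as ``presumably a discriminant computation'' is precisely what the paper supplies by citing the Carmon--Rudnick theorem on square-freeness and coprimality of the discriminants $\disc_t(\mathcal{F}(\mathbf{u},U_n,t)+a_i)$, so your outline is correct and matches the paper's proof in all essentials.
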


\begin{remark}
In contrast to the original Hardy-Littlewood conjecture, in Theorem~\ref{thm:main} the $a_1, \ldots, a_r$ do not have to be fixed in advance. Actually even the characteristic of the ring $\mathbb{F}_q$ may vary. 
\end{remark}

\begin{remark}
Our proof may give an explicit bound of the implied constant in the above theorem. However this bound is not so good, as its dependence on $n$ is worse than $n!$. For this reason, in this paper, we avoid tracking  this bound. 
\end{remark}

\begin{remark}
In \cite{BenderPollack} Bender and Pollack prove Theorem~\ref{thm:main} for $r=2$. 
\end{remark}

\begin{remark}
Theorem~\ref{thm:main} extends a special case of a result of Pollack \cite{Pollack2008} (provided $\gcd(2n,q)=1$) and the author \cite{BarySoroker2012} (provided either $q$ odd or $q$ even and $n$ odd) from constant $a_1, \ldots, a_r$ to polynomials. 
\end{remark}
\subsection*{Outline of the proof}
Our approach is generic in the following sense: We take $\mathcal{F}$ to be a degree $n$ monic polynomial with variable coefficients and count in how many ways we can specialize the coefficients of $\mathcal{F}$ to $\mathbb{F}_q$ such that $f+a_1, \ldots, f+a_r$ are irreducible, where $f$ is the specialized polynomial. 

In order to achieve this we use an irreducibility criterion, whose proof is based on an earlier result of the author reducing the problem to a problem on rational points. Then the proof applies the Lang-Weil estimates (Section~\ref{sec:irr}). 

In order to apply the irreducibility criterion we need to calculate the Galois group of $\prod_{i=1}^r (\mathcal{F}+a_i)$. Using a group theoretical lemma, this calculation is reduced to proving square independence of discriminants which is achieved by applying a result of Carmon and Rudnick  \cite{CarmonRudnick} (Section~\ref{sec:Galois}).

\section{Irreducibility Result}\label{sec:irr}
We denote by $\tilde{\mathbb{F}}_q$ a fixed algebraic closure of $\mathbb{F}_q$.

\begin{proposition}\label{prop:chobotarev}
Let $(\mathbf{U},t)=(U_1, \ldots, U_n,t)$ be an $(n+1)$-tuple of variables, let $\mathcal{F}_1,\ldots, \mathcal{F}_r\in \mathbb{F}_q[\mathbf{U},t]$ be polynomials each of degree $n$ in $t$ and with respective splitting fields $F'_1, \ldots, F'_r$ over $E'=\mathbb{F}_q(\mathbf{U})$, and let $F' = F'_1\cdots F'_r$. Let $E=E'\cdot \tilde{\mathbb{F}}_q$, and similarly $F_i = F'_i\cdot \tilde{\mathbb{F}}_q$ and $F=F'\cdot\tilde{\mathbb{F}}_q$.  Assume that 
\[
\Gal(F/E)\cong  S_n^r.
\]
Then the number of $\mathbf{u}=(u_1, \ldots,u_n)\in \mathbb{F}_q^n$ for which all the specialized polynomials $f_i(t)=\mathcal{F}_i(\mathbf{u},t)$ are irreducible is $q^n/n^r + O_{n,r}(q^{n-1/2})$ as $q\to \infty$ and $n,r$ are fixed.
\end{proposition}

To prove this we apply \cite[Lemma 2.8]{BarySorokerJarden2012} (which is a special case of \cite[Lemma 2.2]{BarySoroker2012}), which we quote here for the reader's convenience:

\begin{lemma}\label{lem:tech}
Let $K$ be a field, let $(\mathbf{U},t) = (U_1,\ldots,U_n,t)$ an $(n+1)$-tuple of variables over $K$.
For each $1\le i\le r$ let $\mathcal{F}_i\in K[\mathbf{U},t]$ be an absolutely irreducible polynomial 
which is separable and of degree $n_i$ in $t$.
Let $L_i$ be a Galois
extension of $K$ of degree $n_i$.

We denote the splitting field of $\mathcal{F}=\mathcal{F}_1\cdots \mathcal{F}_r$,
considered as a polynomial in $t$, over $E'=K(\mathbf{U})$ by $F'$
and assume that $F'/K$ is regular and $\Gal(F'/E')\cong\prod_{i=1}^r S_{n_i}$.

Then there exist 
a proper algebraic subset $Z$ of $\mathbb{A}_K^n$,
an absolutely irreducible normal affine
$K$-variety $W$, and a finite \'etale map $\rho\colon W\to V$
with $V=\mathbb{A}_K^n\smallsetminus Z$
such that the following conditions hold:
\begin{enumerate}
\item
$Z$ and $W$ are defined in $\mathbb{A}_K^n$ and $\mathbb{A}_K^{n+3}$,
respectively, by polynomials with coefficients in
$K$ whose degrees are bounded in terms of $n,r$.
\item
$\deg(\mathcal{F}_i(\mathbf{u},t))=n_i$ for each $\mathbf{u}\in V(K)$ and  $i=1,\ldots,r$.
\item
$\rho(W(K))$ is the set of all $\mathbf{u}\in V(K)$ such that
$L_i$ is generated by a root of $\mathcal{F}_i(\mathbf{u},t)$,
$i=1,\ldots,r$.
In particular, $\mathcal{F}_i(\mathbf{u},t)$ is irreducible of degree $n_i$ for
all $\mathbf{u}\in\rho(W(K))$ and $1\le i\le r$.
\item
$|(\rho')^{-1}(\mathbf{u})\cap W(K)|=\prod_{i=1}^rn_i$
for all $\mathbf{u}\in\rho(W(K))$.
\end{enumerate}
\end{lemma}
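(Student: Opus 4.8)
The plan is to give a geometric model of the splitting field $F'$ and then construct $W$ by twisting a tautological root cover by the torsors attached to the given extensions $L_i$. First I would build the ambient Galois cover: let $\tilde W$ be the normalization of $\mathbb{A}^n_K$ in $F'$. Since $F'/E'$ is Galois with group $N:=\prod_{i=1}^r S_{n_i}$ and $F'/K$ is regular, $\tilde W$ is an absolutely irreducible normal affine $K$-variety carrying an $N$-action, and $\tilde W\to\mathbb{A}^n_K$ is finite and generically étale; regularity moreover forces $\Gal(F/E)=\Gal(F'/E')=N$, so the geometric monodromy is the full product. Let $Z\subset\mathbb{A}^n_K$ be the union of the branch locus of $\tilde W\to\mathbb{A}^n_K$, the loci $\{\disc_t\mathcal F_i=0\}$, and the vanishing loci of the leading coefficients in $t$ of the $\mathcal F_i$. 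Each is cut out over $K$ by discriminants, resultants and coefficient polynomials of degree bounded in terms of $n,r$, giving the $Z$-part of (1); and for $\mathbf u\in V(K):=(\mathbb{A}^n_K\smallsetminus Z)(K)$ each $\mathcal F_i(\mathbf u,t)$ is separable of degree exactly $n_i$, which is (2). Over $V$ the map $\tilde W\to V$ is finite étale and $N$-Galois.

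Next I would construct $W$. For each $i$ let $\tilde W_i\to V$ be the $S_{n_i}$-Galois subcover of $\tilde W$ with function field the splitting field of $\mathcal F_i$ over $E'$, and let $R_i:=\tilde W_i/S_{n_i-1}$ be the degree-$n_i$ root cover; it is absolutely irreducible (as $\mathcal F_i$ is) with geometric monodromy $S_{n_i}$, and its $K$-points over $\mathbf u$ are the \emph{rational} roots of $\mathcal F_i(\mathbf u,t)$, which is not what we want. Instead we want roots lying in $L_i$, so I would \emph{twist}: viewing $\mathrm{Spec}(L_i)\to\mathrm{Spec}\,K$ as a $G_i$-torsor ($G_i=\Gal(L_i/K)$, $|G_i|=n_i$) and fixing the regular embedding $G_i\hookrightarrow S_{n_i}$, let $W_i\to V$ be the form of $R_i$ obtained by Galois descent along the induced $S_{n_i}$-torsor $\Ind_{G_i}^{S_{n_i}}\mathrm{Spec}(L_i)$. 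Concretely $W_i\times_K\overline{K}\cong R_i\times_K\overline{K}$, so $W_i$ is again absolutely irreducible, normal and finite étale of degree $n_i$ over $V$, but its $K$-rationality is twisted so that $W_i(K)$ over $\mathbf u$ consists of the roots $\theta\in L_i$ of $\mathcal F_i(\mathbf u,t)$ compatible with the fixed $G_i$-labelling. Finally set $W:=W_1\times_V\cdots\times_V W_r$ and let $\rho\colon W\to V$ be the structure map.

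It then remains to verify (3), (4) and the absolute irreducibility of $W$. For the dictionary I would use the standard description of $K$-points of a twisted cover: $\mathbf u\in\rho(W_i(K))$ iff the pullback $S_{n_i}$-torsor $\mathbf u^\ast\tilde W_i$ — equivalently the splitting $K$-algebra of $\mathcal F_i(\mathbf u,t)$ — is isomorphic to $\Ind_{G_i}^{S_{n_i}}\mathrm{Spec}(L_i)$, which for a degree-$n_i$ polynomial happens exactly when the polynomial is irreducible with splitting field $L_i$, i.e. when a root generates $L_i$; and in that case the regular $G_i$-action shows all $n_i$ roots lie in $L_i$ and are compatible with the labelling, so $|W_i(K)_{\mathbf u}|=n_i$. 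Taking the product over $i$ yields (3) and the fibre count $\prod_i n_i$ of (4). For the absolute irreducibility of $W$ I would note that $W\times_K\overline{K}\cong(R_1\times_V\cdots\times_V R_r)\times_K\overline{K}$ is the fibre product of the geometric root covers, and this is irreducible precisely because their Galois closures are linearly disjoint over $E=\overline{K}(\mathbf U)$ — which is exactly the hypothesis $\Gal(F/E)\cong\prod_i S_{n_i}$.

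This last point, together with making the twist/descent dictionary precise — so that the residue field realized at a good $\mathbf u$ is the \emph{given} extension $L_i$ and not merely an abstract copy, and so that the fibre count is exactly $n_i$ rather than a conjugacy-class multiple — is the main obstacle. By comparison, the bounded-degree bookkeeping needed for (1), namely presenting $W$ inside some $\mathbb{A}_K^{n+3}$ by equations of controlled degree via a primitive element for the étale cover, is routine.
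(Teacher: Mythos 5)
The paper does not actually prove this lemma: it is quoted verbatim from \cite[Lemma 2.8]{BarySorokerJarden2012} (a special case of \cite[Lemma 2.2]{BarySoroker2012}), and the proof there is precisely the field-crossing/twisting construction you outline --- normalize $\mathbb{A}^n_K$ in $F'$, remove the discriminant/leading-coefficient/branch loci to get $V$, embed each $G_i=\Gal(L_i/K)$ regularly into $S_{n_i}$, and twist so that $K$-points over $\mathbf{u}$ detect that the specialization homomorphism $\Gal(K)\to S_{n_i}$ is conjugate to the regular representation of $G_i$, which for a degree-$n_i$ polynomial is equivalent to irreducibility with a root generating $L_i$; regularity of the regular subgroup ($\lambda(G_i)\cap gS_{n_i-1}g^{-1}=1$) then yields exactly $n_i$ points per fibre, and $\Gal(F/E)\cong\prod_i S_{n_i}$ gives absolute irreducibility of the fibre product, just as you say. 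So your proposal is essentially a reconstruction of the cited proof. One caveat on the step you yourself flag as the main obstacle: you cannot literally obtain $W_i$ by Galois descent on $R_i$, since for $n_i\ge 3$ one has $N_{S_{n_i}}(S_{n_i-1})=S_{n_i-1}$, so $\mathrm{Aut}(R_{i,\overline{K}}/V_{\overline{K}})$ is trivial and $R_i$ carries no nontrivial descent data of its own; the twist must instead be performed on the Galois closure $\tilde W_i$ (e.g.\ as the contracted product $(\tilde W_i\times_K T)/S_{n_i}$ with $T=\Ind_{G_i}^{S_{n_i}}\mathrm{Spec}(L_i)$, or equivalently on the $\pi_1$-set $S_{n_i}/S_{n_i-1}$ with the cocycle acting by right translation, \`a la D\`ebes--Legrand), after which your dictionary and fibre counts go through exactly as claimed --- in the original source this is carried out in Galois-theoretic language, with $W$ the normalization of $V$ in a suitable fixed field of $F'\cdot L_1\cdots L_r$, which also produces the explicit bounded-degree presentation in $\mathbb{A}^{n+3}_K$ required by condition~(1).
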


\begin{remark}
In \cite[Lemma 2.8]{BarySorokerJarden2012} the condition that $F'$ is regular over $K$ was mistakenly omitted. See erratum of \cite{BarySorokerJarden2012} for further details. 
\end{remark}

\begin{proof}[Proof of Proposition~\ref{prop:chobotarev}]
Let $K=\mathbb{F}_q$. Since 
\[
S_n^r\cong \Gal(F/E)\hookrightarrow \Gal(F'/E')\hookrightarrow S_n^r,
\]
we get that $\Gal(F/E)\cong \Gal(F'/E')\cong S_n^r$ and thus that $F'/K$ is regular. In particular, each $\mathcal{F}_i$ is absolutely irreducible and separable. For each $i$, we let $L_i=\mathbb{F}_{q^n}$ be the unique extension of $\mathbb{F}_q$ of degree $n$. 

By Lemma~\ref{lem:tech} (with $n_1=\cdots = n_r = n$) there exist a proper algebraic subset $Z$ of $\mathbb{A}_K^n$,
an absolutely irreducible normal affine
$K$-variety $W'$, and a finite \'etale map $\rho'\colon W'\to V$
with $Z=\mathbb{A}_K^n\smallsetminus V$ that satisfy conditions (1)-(4) of the lemma. 

Since $Z$ is a closed subset of $\mathbb{A}_K^n$ defined by polynomials of bounded degrees in terms of $n,r$ we get that $\# Z(K)=O_{n,r}(q^{n-1})$ (see e.g.\ \cite[Lemma~1]{LangWeil54}). Note that $\dim W'=n$, so the Lang-Weil estimates give 
\[
\#W'(K)=q^n +  O_{n,r}(q^{n-\frac{1}{2}}).
\]
(See \cite[Theorem 1]{LangWeil54}. In \cite[Theorem 2.1]{Zywina2010} Zywina gives the best known bounds on the implied constant.)

Let $S$ be the set of all $\mathbf{u}\in K^n$ such that $\mathcal{F}_i(\mathbf{u},t)$ is irreducible in $K[t]$, for every $i=1,\ldots, r$. By Lemma~\ref{lem:tech}(3), $\rho'(W'(K))$ is the set of all $\mathbf{u}\in V(K)\subseteq K^n$ such that $\mathbb{F}_{q^n}$ is generated by a root of $\mathcal{F}_i(\mathbf{u},t)$ over $K$, for every $i=1,\ldots, r$. Since $\mathcal{F}_i(\mathbf{u},t)$ is of degree $n$, for $\mathbf{u}\in V(K)$, and $\mathbb{F}_{q^n}$ is the unique extension of degree $n$ of $K$, we get that $\rho'(W'(K)) = S\cap V(K)$, hence
$\#\rho'(W'(K))  =  \#(S\cap V(K))$.

Lemma~\ref{lem:tech}(4) gives that $\#((\rho')^{-1}(\mathbf{u}) \cap W'(K)) = n^r$, for every $\mathbf{u} \in \rho'(W'(K)) $. We therefore get that 
\begin{eqnarray*}
\#S &=& \#(S\cap V(K)) + \#(S\cap Z(K))  =\#\rho'(W'(K)) + O_{n,r}(q^{n-1})\\ 
&=& \frac{1}{n^r} \#W'(K) + O_{n,r}(q^{n-1}) = \frac{ q^n }{n^r}+ O_{n,r}(q^{n-\frac{1}{2}}),
\end{eqnarray*}
as needed. 
\end{proof}

\section{Galois groups}\label{sec:Galois}
Let $q$ be an odd prime power, let $n,r$ be positive integers, let $\mathbf{a} = (a_1, \ldots, a_r)\in \mathbb{F}_q[t]^r$ be an $r$-tuple of distinct polynomials each of degree $<n$, let $\mathbf{U}=(U_1, \ldots, U_n)$  be an $n$-tuple of variables over $\mathbb{F}_q$, and let $\mathcal{F}=t^n + U_1 t^{n-1} + \cdots + U_n\in \mathbb{F}_q[\mathbf{U},t]$. For each $i=1, \ldots, r$, let $\mathcal{F}_i = \mathcal{F}+a_i$. We let $E=\tilde{\mathbb{F}}_q(\mathbf{U})$, for each $i=1, \ldots, r$, we let $F_i$ be the splitting field of $\mathcal{F}_i$ over $E$, and we denote by $F$ the compositum of $F_1, \ldots, F_r$. Thus $F$ is the splitting field of $\prod_{i=1}^r \mathcal{F}_i$ over $E$. 
Since each of the $\mathcal{F}_i$ has variable coefficients, we have an isomorphism 
\begin{equation}\label{eq:Sn}
\Gal(F_i/E)\cong S_n
\end{equation}
that is induced form the action of $\Gal(F_i/E)$ on the roots of $\mathcal{F}_i$ in $F_i$ (here $S_n$ is the symmetric group of degree $n$). Thus the restriction maps $r_i \colon \Gal(F/E) \to \Gal(F_i/E)$, $i=1, \ldots, r$, induce an embedding $\phi\colon  \Gal(F/E)\to\prod_{i=1}^r \Gal(F_i/E)\cong S_n^r$. 

\[
\xymatrix{
&F\\
F_1\ar@{-}[ur]\ar@{.}[rr]
				&& F_r\ar@{-}[ul]\\
&E=\tilde{\mathbb{F}}_q(\mathbf{U})\ar@{-}[ur]^{S_n}\ar@{-}[ul]_{S_n}
}
\]

\begin{proposition}\label{prop:Galois}
Under the notation above, 
$\phi \colon \Gal(F/E) \to S_n^r$ is an isomorphism.
\end{proposition}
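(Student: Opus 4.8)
The plan is to prove that the embedding $\phi$ is surjective, since its injectivity and the surjectivity of each restriction $r_i\colon \Gal(F/E)\to\Gal(F_i/E)\cong S_n$ are already part of the setup. Writing $G:=\phi(\Gal(F/E))\le S_n^r$, the proposition is exactly the assertion $G=S_n^r$, and I would separate the purely group-theoretic content from the arithmetic content. We may assume $n\ge 2$, the case $n=1$ being trivial.

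First, the group theory. I would prove: if $G\le S_n^r$ surjects onto every factor and the composite
\[
\pi\colon G\hookrightarrow S_n^r\xrightarrow{\ \mathrm{sgn}^{\times r}\ }(\ZZ/2\ZZ)^r
\]
is surjective, then $G=S_n^r$ (here $\mathrm{sgn}$ is the sign character, with kernel $A_n$). I would argue by induction on $r$. For the inductive step, let $G'$ be the image of $G$ in the first $r-1$ factors; it inherits both hypotheses, so $G'=S_n^{r-1}$ by induction. Viewing $G\le S_n^{r-1}\times S_n$ as surjecting onto both factors, Goursat's lemma presents $G$ as the fibre product over a common quotient $Q$ of $S_n^{r-1}$ and of $S_n$, say via surjections $\lambda\colon S_n^{r-1}\twoheadrightarrow Q$ and $\mu\colon S_n\twoheadrightarrow Q$ with $G=\{(a,b):\lambda(a)=\mu(b)\}$. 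If $Q\ne 1$, then — since the only simple quotient of $S_n$ is $\ZZ/2\ZZ$ for every $n\ge 2$ — the nontrivial quotient $Q$ admits a surjection $\beta\colon Q\twoheadrightarrow\ZZ/2\ZZ$. Now $\beta\mu$ is the unique nontrivial homomorphism $S_n\to\ZZ/2\ZZ$, hence $\beta\mu=\mathrm{sgn}$, while $\beta\lambda$ factors through the abelianization $(\ZZ/2\ZZ)^{r-1}$ and so equals $\sum_{j\in J}\mathrm{sgn}_j$ for some nonempty $J\subseteq\{1,\dots,r-1\}$. Applying $\beta$ to $\lambda(a)=\mu(b)$ gives the relation $\sum_{j\in J}\mathrm{sgn}(a_j)=\mathrm{sgn}(b)$ on every $(a_1,\dots,a_{r-1},b)\in G$, confining $\pi(G)$ to a proper hyperplane of $(\ZZ/2\ZZ)^r$ and contradicting surjectivity of $\pi$. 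Hence $Q=1$ and $G=S_n^r$.

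Second, I would translate the hypothesis ``$\pi\circ\phi$ is surjective'' into arithmetic. The fixed field of $A_n\le\Gal(F_i/E)\cong S_n$ is the quadratic extension $E(\sqrt{\disc\mathcal{F}_i})$, since the square root of the discriminant is the Vandermonde product $\prod_{k<l}(\alpha_k-\alpha_l)$ of the roots of $\mathcal{F}_i$, on which $\Gal$ acts through $\mathrm{sgn}$. Therefore $\pi\circ\phi$ corresponds to the extension $E(\sqrt{\disc\mathcal{F}_1},\dots,\sqrt{\disc\mathcal{F}_r})/E$, and it is surjective precisely when this extension has degree $2^r$ over $E$ — equivalently, when $\disc\mathcal{F}_1,\dots,\disc\mathcal{F}_r$ are independent modulo squares in $E^\times=\tilde{\mathbb{F}}_q(\mathbf{U})^\times$, i.e. for every nonempty $I\subseteq\{1,\dots,r\}$ the product $\prod_{i\in I}\disc\mathcal{F}_i$ is not a square in $\tilde{\mathbb{F}}_q(\mathbf{U})$.

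The hard part is this square-independence, and this is where I would invoke Carmon–Rudnick. Since $\mathcal{F}_i=\mathcal{F}+a_i$ differs from $\mathcal{F}$ only by a translation of the coefficient vector $\mathbf{U}$ by the lower coefficients of $a_i$, each $\disc\mathcal{F}_i$ is a translate of the generic discriminant $\disc\mathcal{F}(\mathbf{U})$, and distinctness of the $a_i$ makes these translations pairwise distinct. The result of Carmon and Rudnick \cite{CarmonRudnick}, valid for $q$ odd, yields exactly that no nonempty product of these translated discriminants is a square in $\tilde{\mathbb{F}}_q(\mathbf{U})$; the substance there is geometric — controlling how the ramification loci of the discriminants move under translation so that they cannot combine into a square, a phenomenon genuinely sensitive to the characteristic, which is why oddness of $q$ enters. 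Feeding this back through the two reductions above gives $G=S_n^r$, so that $\phi$ is an isomorphism.
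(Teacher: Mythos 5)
Your first two steps are correct, and they genuinely differ from the paper's route: where you prove the group-theoretic statement from scratch via Goursat's lemma and then translate surjectivity of $\mathrm{sgn}^{\times r}\circ\phi$ into square-independence of the discriminants by Kummer theory, the paper instead cites a packaged criterion (its Lemma~\ref{lem:discimpliesdisj}, quoted from an earlier paper of the author): if $\Gal(F_i/E)\cong S_n$ for all $i$ and the square classes $\delta_t(\mathcal{F}_1),\dots,\delta_t(\mathcal{F}_r)$ are linearly independent in $E^\times/(E^\times)^2$, then $F_1,\dots,F_r$ are linearly disjoint over $E$. Both proofs therefore funnel through exactly the same arithmetic statement, and your self-contained reduction to it is sound.

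The gap is in how you dispose of that arithmetic statement. You assert that the result of Carmon--Rudnick ``yields exactly'' that no nonempty product $\prod_{i\in I}\disc_t\mathcal{F}_i$ is a square in $\tilde{\mathbb{F}}_q(\mathbf{U})$. It does not. What Carmon--Rudnick prove (and what the paper quotes as Lemma~\ref{lem:CR}) is a counting statement about \emph{specializations}: for all but $O_{n,r}(q^{n-2})$ of the tuples $(u_1,\dots,u_{n-1})$ over $\mathbb{F}_q$, the one-variable discriminants $\disc_t(\mathcal{F}(\mathbf{u},U_n,t)+a_i)\in\mathbb{F}_q[U_n]$ are square-free, pairwise coprime, and non-constant. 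Passing from this to the generic $n$-variable statement you need is precisely the content of the paper's Lemma~\ref{lem:ind}, and it requires an argument you never give: since the count is only asymptotic in $q$, one must first replace $\mathbb{F}_q$ by $\mathbb{F}_{q^e}$ with $e$ large enough to guarantee that even one good specialization exists; for such a specialization every nonempty product of the specialized discriminants is square-free and non-constant, hence a non-square in $\tilde{\mathbb{F}}_q(U_n)$; and finally one invokes the fact that a square cannot specialize to a non-square (the generic product is a polynomial, so any rational square root of it would already be a polynomial and would itself specialize). Your gloss about ``controlling how the ramification loci of the discriminants move under translation'' is neither a proof of the needed statement nor an accurate description of what is in Carmon--Rudnick; as written, the crucial arithmetic step of your argument rests on attributing to the reference a statement it does not contain.
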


The proof the Proposition~\ref{prop:Galois} appears after the following two lemmas. 

For a separable polynomial $f\in E[t]$ we denote by $\delta_t(f)$ the square class of its discriminant $\disc_t(f)$ in the $\mathbb{F}_2$-vector space $E^\times/(E^\times)^2$. (Recall that the characteristic of $E$ is not $2$.)

\begin{lemma}[Carmon-Rudnick]\label{lem:CR}
The number $N$ of $\mathbf{u} = (u_1, \ldots, u_n-1)\in \mathbb{F}_q^{n-2}$ for which  $\disc_t(\mathcal{F}(\mathbf{u},U_n,t)+a_1), \ldots  , \disc_t(\mathcal{F}(\mathbf{u},U_n,t)+a_r)$ are square-free, coprime, and non-constant is $q^{n-1} + O_{n,r}(q^{n-2})$.
\end{lemma}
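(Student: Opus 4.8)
The plan is to reduce the three conditions (square-free, coprime, non-constant) to \emph{generic} conditions on $\mathbf{u}$ cut out by the non-vanishing of explicit polynomials, and then to count the bad specializations by the same bounded-degree point count used in the proof of Proposition~\ref{prop:chobotarev}.

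First I would rewrite each discriminant through critical values. Put $H=\mathcal{F}(\mathbf{u},0,t)=t^n+u_1t^{n-1}+\cdots+u_{n-1}t$ and $h_i=H+a_i$, so that $\mathcal{F}(\mathbf{u},U_n,t)+a_i=h_i+U_n$ while $h_i'$ is independent of $U_n$. Since $\disc_t(h_i+U_n)=\pm\operatorname{Res}_t(h_i+U_n,h_i')$ and $\operatorname{Res}_t(h_i+U_n,h_i')=\pm(\operatorname{lc}h_i')^{n}\prod_{\beta}\bigl(U_n+h_i(\beta)\bigr)$, where $\beta$ ranges over the roots of $h_i'$, the polynomial $D_i(U_n):=\disc_t(\mathcal{F}(\mathbf{u},U_n,t)+a_i)$ has, up to a nonzero scalar, roots $-h_i(\beta)$. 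Reading off the three conditions: $D_i$ is non-constant iff $h_i'$ is non-constant (automatic for generic $\mathbf{u}$ when $n\ge2$, the case $n=1$ being trivial for the theorem); $D_i$ is square-free iff $h_i$ is a Morse polynomial, i.e.\ $h_i'$ is separable and the critical values $h_i(\beta)$ are pairwise distinct; and $D_i,D_j$ are coprime iff $h_i$ and $h_j$ share no critical value.

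Next I would show that each of these holds for $\mathbf{u}$ in a nonempty Zariski-open subset of the parameter space $\mathbb{A}^{n-1}$ of specializations of $(U_1,\dots,U_{n-1})$. Square-freeness and coprimality of the $U_n$-restrictions are governed, respectively, by the non-vanishing of $\disc_{U_n}(D_i)$ and $\operatorname{Res}_{U_n}(D_i,D_j)$, each a polynomial in $\mathbf{u}$ of degree bounded in terms of $n,r$. It therefore suffices to exhibit, for each $i$ and each pair $i\ne j$, a single specialization over $\tilde{\mathbb{F}}_q$ at which the relevant condition holds; for square-freeness this is clear because $t^n+ct+a_{i,0}$ is Morse for $c\ne0$ whenever the characteristic does not divide $n$, and such a polynomial arises as some $h_i$. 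This genericity — that the $\disc_t(\mathcal{F}+a_i)$ are square-free, non-constant in $U_n$, and pairwise coprime for generic $\mathbf{u}$ — is precisely the statement I would import from Carmon and Rudnick~\cite{CarmonRudnick}.

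Finally, let $B\subseteq\mathbb{A}^{n-1}$ be the locus where some condition fails. By the previous step $B$ lies in the zero set of a single nonzero polynomial in $\mathbf{u}$ (a product of the various discriminants, resultants, and leading coefficients) of degree bounded in terms of $n,r$, hence is a proper closed subset; the bounded-degree estimate \cite[Lemma~1]{LangWeil54} then gives $\#B(\mathbb{F}_q)=O_{n,r}(q^{n-2})$, and subtracting from the full count $q^{n-1}$ yields $N=q^{n-1}+O_{n,r}(q^{n-2})$. I expect the genuine obstacle to be the coprimality step: unlike square-freeness, the polynomials $h_1,\dots,h_r$ are \emph{coupled} through the common part $H$, so their critical values cannot be chosen independently, and showing that a generic $H$ keeps all the critical values of $H+a_1,\dots,H+a_r$ distinct is exactly the delicate independence-of-discriminants statement (and one sensitive to small characteristic) that Carmon and Rudnick establish.
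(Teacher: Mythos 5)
Your proposal is correct in outline, but it is worth being precise about where it lands relative to the paper. The paper's proof of Lemma~\ref{lem:CR} is a direct citation plus a trivial intersection: Carmon and Rudnick themselves prove the \emph{counting} statement for each fixed $i$ --- the set $G_{n,i}$ of tuples $\mathbf{u}$ for which $\disc_t(\mathcal{F}(\mathbf{u},U_n,t)+a_i)$ is square-free of positive degree and coprime to the other $r-1$ discriminants has $q^{n-1}+O_{n,r}(q^{n-2})$ elements --- and the lemma follows by intersecting $G_{n,1},\dots,G_{n,r}$, since each complement has only $O_{n,r}(q^{n-2})$ points. What you propose instead is to reconstruct the inside of that citation: the critical-value factorization $\disc_t(h_i+U_n)=\pm(\operatorname{lc}h_i')^{n}\prod_\beta\bigl(U_n+h_i(\beta)\bigr)$, the dictionary between square-freeness, coprimality, non-constancy and Morse-type conditions on the $h_i$, the containment of the bad locus in a bounded-degree hypersurface of $\mathbb{A}^{n-1}$, and the $O_{n,r}(q^{n-2})$ point count from \cite[Lemma~1]{LangWeil54}. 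This is essentially Carmon--Rudnick's own strategy, and since you still import the crucial genericity/non-emptiness statement from \cite{CarmonRudnick}, both routes bottom out at the same source; yours buys an explanation of \emph{why} the count holds and where the characteristic enters, at the price of re-deriving what the paper simply quotes.

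One concrete error, which matters for your closing assessment of where the difficulty lies: $t^n+ct+a_{i,0}$ with $c\neq0$ is \emph{not} Morse whenever $p\nmid n$; one also needs $p\nmid n-1$. Indeed, for $h=t^n+ct+a_{i,0}$ one has $h'=nt^{n-1}+c$, and if $p\mid n-1$, writing $n-1=pm$, then $h'=n\bigl(t^{m}+(c/n)^{1/p}\bigr)^{p}$ over $\tilde{\mathbb{F}}_q$, so all critical points are multiple; moreover each critical value $h(\beta)=\tfrac{n-1}{n}c\beta+a_{i,0}$ collapses to $a_{i,0}$, so the critical values all coincide. Either failure makes $\disc_t(h+U_n)$ non-square-free, and in the excluded case $p\mid n$ the discriminant is constant. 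So the square-freeness genericity is itself characteristic-sensitive --- not only the coprimality step, as your last paragraph suggests --- and this is exactly why your argument, like the paper's, must in the end lean on Carmon and Rudnick rather than on an explicit witness. Since you defer the full genericity statement to them anyway, this slip does not invalidate your proof; it only misplaces the delicacy.
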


\begin{proof}
In  \cite[Page 3]{CarmonRudnick}, for some $i$, a set $G_n$ is defined to be the tuples $\mathbf{u}$ such that $\disc_t(\mathcal{F}(\mathbf{u},U_n,t)+a_i)$ is square-free of positive degree, and for every $j\neq i$, $\disc_t(\mathcal{F}(\mathbf{u},U_n,t)+a_j)$ and $\disc_t(\mathcal{F}(\mathbf{u},U_n,t)+a_i)$ are coprime. They show $\# G_n = q^{n-1}+O_{n,r}(q^{n-2})$. 

Let us denote the above set by $G_{n,i}$ to keep track of the dependence of $i$. Then $N = \# (\bigcap_{i=1}^n G_{n,i}) = q^{n-1} + O_{n,r} (q^{n-2})$, as claimed.   
\end{proof}

\begin{lemma}\label{lem:ind}
The square classes $\delta_t(\mathcal{F}_1), \ldots, \delta_t(\mathcal{F}_r)$ are linearly independent in $E^\times/(E^\times)^2$.
\end{lemma}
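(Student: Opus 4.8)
To say that $\delta_t(\mathcal{F}_1), \ldots, \delta_t(\mathcal{F}_r)$ are linearly independent in the $\mathbb{F}_2$-vector space $E^\times/(E^\times)^2$ is to say that for every non-empty subset $S \subseteq \{1, \ldots, r\}$ the product $\prod_{i \in S} \disc_t(\mathcal{F}_i)$ is not a square in $E^\times = \tilde{\mathbb{F}}_q(\mathbf{U})^\times$. So the plan is to fix such an $S$ and produce a witness to non-squareness. Write $D_i = \disc_t(\mathcal{F}_i) \in \mathbb{F}_q[\mathbf{U}]$; these are honest polynomials in $\mathbf{U}$ because $\mathcal{F}_i = \mathcal{F} + a_i \in \mathbb{F}_q[\mathbf{U}, t]$.

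The first step is the reduction \emph{squares descend under specialization}. Since $\tilde{\mathbb{F}}_q[\mathbf{U}]$ is a UFD, hence integrally closed in $E$, any $h \in \tilde{\mathbb{F}}_q[\mathbf{U}]$ that is a square in $E$ is already a square in $\tilde{\mathbb{F}}_q[\mathbf{U}]$, say $h = g^2$. Applying the specialization homomorphism $\tilde{\mathbb{F}}_q[\mathbf{U}] \to \tilde{\mathbb{F}}_q[U_n]$ sending $U_j \mapsto u_j$ for $1 \le j \le n-1$ and $U_n \mapsto U_n$ then yields $h(\mathbf{u}, U_n) = g(\mathbf{u}, U_n)^2$. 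Taking contrapositives, it suffices to find a single specialization $\mathbf{u} = (u_1, \ldots, u_{n-1})$ for which $\prod_{i \in S} D_i(\mathbf{u}, U_n)$ is not a square in $\tilde{\mathbb{F}}_q(U_n)$.

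For the second step I invoke Lemma~\ref{lem:CR}. Its positive main term $q^{n-1}$ shows that the locus of specializations $\mathbf{u}$ making all the $D_i(\mathbf{u}, U_n)$ simultaneously square-free, non-constant, and pairwise coprime is non-empty (over $\mathbb{F}_q$ once $q$ is large, which is all that is needed since Theorem~\ref{thm:main} is an asymptotic statement in $q$; over $\tilde{\mathbb{F}}_q$ it is non-empty for every odd $q$). Fix such a $\mathbf{u}$. Then $\prod_{i \in S} D_i(\mathbf{u}, U_n)$ is a product of pairwise coprime, square-free, non-constant polynomials in the single variable $U_n$, so it is itself square-free and of positive degree; and a square-free polynomial of positive degree over a field is never a square. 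This is the desired witness, and since $S$ was an arbitrary non-empty subset, the lemma follows.

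The one genuine point to get right is the bridge between the univariate input and the multivariate conclusion: Carmon--Rudnick control the discriminants only after specializing $U_1, \ldots, U_{n-1}$, whereas we want non-squareness of the full multivariate product. The descent-under-specialization observation of the first step is exactly what licenses this passage, and it rests only on $\tilde{\mathbb{F}}_q[\mathbf{U}]$ being integrally closed. The remaining bookkeeping---that the $D_i$ lie in the polynomial ring and that pairwise coprime square-free factors multiply to a square-free product---is routine.
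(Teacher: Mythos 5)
Your proposal follows essentially the same route as the paper's proof: reduce linear independence in $E^\times/(E^\times)^2$ to the assertion that $\prod_{i\in S}\disc_t(\mathcal{F}_i)$ is a non-square for every non-empty $S\subseteq\{1,\ldots,r\}$, use Lemma~\ref{lem:CR} to produce a specialization of $(U_1,\ldots,U_{n-1})$ under which the discriminants become square-free, non-constant and pairwise coprime univariate polynomials, and lift non-squareness back through the specialization. Your integral-closure argument for the lifting step is a welcome sharpening of the paper's one-line ``squares cannot be specialized to non-squares.''

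The one genuine gap is the existence of a good specialization when $q$ is small. Lemma~\ref{lem:CR} counts tuples $\mathbf{u}$ with coordinates in $\mathbb{F}_q$, and its main term $q^{n-1}$ need not dominate the $O_{n,r}(q^{n-2})$ error term unless $q$ is large relative to $n$ and $r$; so for a fixed small odd $q$ the lemma, as stated, gives you nothing. Your parenthetical claim that the locus is non-empty over $\tilde{\mathbb{F}}_q$ ``for every odd $q$'' is precisely what needs proof, and it does not follow from Lemma~\ref{lem:CR} as stated, since that lemma speaks only of $\mathbb{F}_q$-points. The paper closes exactly this hole by base change: apply Lemma~\ref{lem:CR} over $\mathbb{F}_{q^e}$, where the number of good specializations $\mathbf{u}\in\mathbb{F}_{q^e}^{n-1}$ is $q^{e(n-1)}+O_{n,r}(q^{e(n-2)})$ with an implied constant independent of $e$ (the $a_i$ remain distinct of degree $<n$ over $\mathbb{F}_{q^e}$), hence positive for $e$ sufficiently large; any such $\mathbf{u}$ lies in $\tilde{\mathbb{F}}_q^{n-1}$, and the rest of your argument then goes through verbatim. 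Your fallback of proving the statement only for $q\gg_{n,r}1$ and noting that Theorem~\ref{thm:main} is trivially true for bounded $q$ (its error term $O_{n,r}(q^{n-\frac12})$ absorbs everything when $q$ is bounded) is a legitimate way to rescue the main theorem, but it establishes a strictly weaker statement than the lemma, which is asserted for every odd prime power; the passage to $\mathbb{F}_{q^e}$ yields the lemma in full at no extra cost.
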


\begin{proof}
For a specialization $\epsilon\colon (U_1, \ldots, U_{n-1}) \mapsto (u_1, \ldots, u_{n-1})\in \mathbb{F}_{q^e}^{n-1}$ we let $f_\epsilon= t^n + u_1 t^{n-1} + \cdots + u_{n-1}t + U_n\in \mathbb{F}_{q^e}[U_n,t]$ be the specialized polynomial. By Lemma~\ref{lem:CR} for $q^{e(n-1)} + O_{n,r}(q^{e(n-2)})$ such specializations $\epsilon$ the discriminants $\disc_t(f_\epsilon+a_1), \ldots  , \disc_t(f_\epsilon+a_r)$ are square free, coprime, and non-constant. In particular if $e$ is sufficiently large there exists at least one such specialization in $\mathbb{F}_{q^e}\subseteq E$. Thus the square classes $\delta_t(f_\epsilon+a_i)$ of $\disc_t(f_\epsilon+a_i)$ in the $\mathbb{F}_2$-vector space $\mathbb{F}_q(U_n)^\times/(\mathbb{F}_q(U_n)^\times)^2$ are linearly independent. Since squares cannot be specialized to non-squares, we get that $\delta_t(\mathcal{F}_1), \ldots, \delta_t(\mathcal{F}_r)$ are linearly independent. 
\end{proof}

Another ingredient in the proof of Proposition~\ref{prop:Galois} is the following linearly disjointness criterion that is proved in \cite[Lemma 3.4]{BarySoroker2012}:

\begin{lemma}\label{lem:discimpliesdisj}
If $\Gal(\mathcal{F}_i,E)=\Gal(F_i/E)=S_n$ for all $i=1,\ldots, r$ and if $\delta_t(\mathcal{F}_1), \ldots, \delta_t(\mathcal{F}_r)$ are linearly independent in $E^\times/(E^\times)^2$, then $F_1,\cdots, F_r$ are linearly disjoint over $E$. 
\end{lemma}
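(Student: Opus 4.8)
The plan is to prove the statement by induction on $r$, reducing at each step to a single field intersection. For Galois extensions, linear disjointness of $F_1, \ldots, F_r$ over $E$ is equivalent to the chain of conditions $(F_1\cdots F_{i-1})\cap F_i = E$ for $i = 2, \ldots, r$. The case $r=1$ is vacuous. For the inductive step I would first note that $\delta_t(\mathcal{F}_1), \ldots, \delta_t(\mathcal{F}_{r-1})$ are linearly independent, being a subset of an independent set, so the inductive hypothesis applies and shows that $F_1, \ldots, F_{r-1}$ are linearly disjoint; hence $\Gal(F'/E)\cong S_n^{r-1}$ for $F' = F_1\cdots F_{r-1}$. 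It then remains to prove $F'\cap F_r = E$.

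The crux is to analyze $M := F'\cap F_r$. Since $F'/E$ and $F_r/E$ are both Galois, $M/E$ is Galois, and as a subextension of $F_r/E$ it corresponds to a \emph{normal} subgroup $\Gal(F_r/M)\trianglelefteq \Gal(F_r/E)\cong S_n$. Here I would invoke the normal subgroup structure of the symmetric group: every proper normal subgroup of $S_n$ is contained in the alternating group $A_n$. Since $A_n$ is precisely the subgroup fixing the quadratic subextension $E(\sqrt{\disc_t(\mathcal{F}_r)})$—the square class $\delta_t(\mathcal{F}_r)$ encodes exactly this field via the sign character $S_n \to S_n/A_n \cong \mathbb{Z}/2\mathbb{Z}$—I conclude that if $M \neq E$, then $\Gal(F_r/M)$ is a proper normal subgroup, hence contained in $A_n$, and therefore $E(\sqrt{\disc_t(\mathcal{F}_r)}) \subseteq M \subseteq F'$.

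To finish I would show that this containment forces a linear relation among the square classes. The quadratic subextensions of $F'/E$ are the fixed fields of the index-two subgroups of $\Gal(F'/E)\cong S_n^{r-1}$, that is, the kernels of nontrivial homomorphisms $S_n^{r-1}\to \mathbb{Z}/2\mathbb{Z}$. Since the abelianization of $S_n$ is $\mathbb{Z}/2\mathbb{Z}$ for $n \geq 2$, these homomorphisms are exactly the sums $\sum_{i\in I}\mathrm{sgn}_i$ over nonempty $I\subseteq\{1,\ldots,r-1\}$, so the square class of every quadratic subfield of $F'$ lies in the $\mathbb{F}_2$-span of $\delta_t(\mathcal{F}_1), \ldots, \delta_t(\mathcal{F}_{r-1})$. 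Thus $E(\sqrt{\disc_t(\mathcal{F}_r)})\subseteq F'$ yields $\delta_t(\mathcal{F}_r) = \sum_{i\in I}\delta_t(\mathcal{F}_i)$ for some nonempty $I$, contradicting the linear independence of $\delta_t(\mathcal{F}_1), \ldots, \delta_t(\mathcal{F}_r)$. Hence $M = E$, completing the induction.

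The main obstacle, and the place where both hypotheses are genuinely used, is the bridge in the second paragraph: translating the field-theoretic statement $M \neq E$ into the containment $E(\sqrt{\disc_t(\mathcal{F}_r)}) \subseteq M$. This rests entirely on the structural fact that $S_n$ has no proper normal subgroup outside $A_n$, which is what guarantees that any nontrivial piece of $F_r$ lying in $F'$ already drags in the discriminant quadratic field. The hypothesis $\Gal(F_i/E) = S_n$ is essential here: for a Galois group with a richer lattice of normal subgroups, a nontrivial intersection need not contain the discriminant field, and the argument would break down.
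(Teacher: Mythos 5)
Your proof is correct and complete. Note that the paper does not actually prove this lemma—it quotes it from \cite[Lemma 3.4]{BarySoroker2012}—and your argument is precisely the standard one underlying that citation: induction via the chain criterion $(F_1\cdots F_{i-1})\cap F_i=E$ for Galois extensions, the fact that every proper normal subgroup of $S_n$ lies in $A_n$ (so any nontrivial Galois subextension of $F_r/E$ contains $E(\sqrt{\disc_t(\mathcal{F}_r)})$), and the identification of the quadratic subextensions of $F'/E$ with the nontrivial characters $\sum_{i\in I}\mathrm{sgn}_i$ of $S_n^{r-1}$, whose fixed fields have square classes $\sum_{i\in I}\delta_t(\mathcal{F}_i)$. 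The only implicit point worth flagging is that you use $n\ge 2$ (for the abelianization of $S_n$ and for $A_n\ne S_n$), but this is forced by the hypotheses, since linear independence requires each $\delta_t(\mathcal{F}_i)$ to be a nontrivial square class.
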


\begin{proof}[Proof of Proposition~\ref{prop:Galois}]
Lemma~\ref{lem:ind} and \eqref{eq:Sn} imply that the conditions of Lemma~\ref{lem:discimpliesdisj} are met. Therefore $F_1,\ldots, F_r$ are linearly disjoint, hence $\phi$ is an isomorphism (\cite[VI.1.15, \S1]{Lang}). 
\end{proof}

\section{Proof of Theorem~\ref{thm:main}}
Let $n$ and $r$ be fixed positive integers and let $q$ be an odd prime power. Let $a_1, \ldots, a_r\in \mathbb{F}_q[t]$ be distinct polynomials, each of degree $<n$. Let $\mathbf{U}=(U_1, \ldots, U_n)$ be an $n$-tuple of variables over $\mathbb{F}_q$, and let $\mathcal{F}=t^n + U_1 t^{n-1} + \cdots + U_n\in \mathbb{F}_q[\mathbf{U},t]$. For each $i=1, \ldots, r$, let $\mathcal{F}_i = \mathcal{F}+a_i$. Proposition~\ref{prop:Galois} shows that the assumptions of Proposition~\ref{prop:chobotarev} are met. 

Thus the number of $\mathbf{u} \in \mathbb{F}_q$ for which $\mathcal{F}(\mathbf{u},t)+a_i = \mathcal{F}_i(\mathbf{u},t)$ are simultaneously irreducible in $\mathbb{F}_q[t]$ is $\frac{q^{n}}{n^r} + O_{n,r}(q^{n-\frac12})$. This finishes the proof since this number equals $\pi(q,n;\mathbf{a})$. 
\qed

\section*{Acknowledgments}
The author wishes to thank Ze\'ev Rudnick for raising the question that led to this research and to Benjamin Collas and Arno Fehm for many helpful remarks that improved the presentation of this paper.  
\bibliographystyle{plain}
\bibliography{bib}

\end{document}